\newcommand{\xRrightarrow}[2][]{\ext@arrow 0359\Rrightarrowfill@{#1}{#2}}
\newcommand{\Rrightarrowfill@}{\arrowfill@\equiv\equiv\Rrightarrow}
\newcommand{\xLleftarrow}[2][]{\ext@arrow 3095\Lleftarrowfill@{#1}{#2}}
\newcommand{\Lleftarrowfill@}{\arrowfill@\Lleftarrow\equiv\equiv}
\newcommand{\xLleftRrightarrow}[2][]{\ext@arrow 3399\LleftRrightarrowfill@{#1}{#2}}
\newcommand{\LleftRrightarrowfill@}{\arrowfill@\Lleftarrow\equiv\Rrightarrow}
\newcommand{\rt}{\rightarrow}
\newcommand{\st}{\stackrel}
\newcommand{\n}{\frak{n}}
\newcommand{\CX}{\mathcal{X} }
\newcommand{\im}{{\rm{Im}}}
\newcommand{\add}{{\rm{add}}}
\newcommand{\gen}{{\rm{gen}}}
\newcommand{\cogen}{{\rm{cogen}}}
\newcommand{\RNum}[1]{\uppercase\expandafter{\romannumeral #1\relax}}
\newcommand{\pd}{{\rm{pd}}}
\newcommand{\id}{{\rm{id}}}
\newcommand{\Coker}{{\rm{Coker}}}
\newcommand{\Ker}{{\rm{Ker}}}
\newcommand{\Hop}{{\rm{op}}}
\newcommand{\Tor}{{\rm{Tor}}}
\newcommand{\Hom}{{\rm{Hom}}}
\newcommand{\Ext}{{\rm{Ext}}}
\newcommand{\End}{{\rm{End}}}
\newtheorem{theorem}{Theorem}[section]
\newtheorem{corollary}[theorem]{Corollary}
\newtheorem{lemma}[theorem]{Lemma}
\theoremstyle{definition}
\newtheorem{definition}[theorem]{Definition}
\newtheorem{example}[theorem]{Example}
\newtheorem{remark}[theorem]{Remark}
\theoremstyle{plain}
\theoremstyle{definition}
\numberwithin{equation}{section}
\begin{document}

\title[On the Wakamatsu tilting conjecture]{On the Wakamatsu tilting conjecture}

\author[K. Divaani-Aazar, A. Mahin Fallah and M. Tousi]{Kamran Divaani-Aazar, Ali Mahin Fallah and Massoud Tousi}

\address{K. Divaani-Aazar, Department of Mathematics, Faculty of Mathematical Sciences, Alzahra University,
Tehran, Iran.}
\email {Email address: kdivaani@ipm.ir}

\address{A. Mahin Fallah, School of Mathematics, Institute for Research in Fundamental Sciences (IPM), P.O.
Box: 19395-5746,
Tehran, Iran.}
\email{amfallah@ipm.ir, ali.mahinfallah@gmail.com }

\address{M. Tousi, Department of Mathematics, Faculty of Mathematical Sciences, Shahid Beheshti University,
Tehran, Iran.}
\email{mtousi@ipm.ir}

\subjclass[2020]{16D20, 16G99, 18G05, 18E40.}

\keywords{Artin algebra, perfect ring, tilting module, Wakamatsu tilting module.\\
The research of the second author is supported by a grant from IPM (No.1402160015).}

\begin{abstract} Let $R$ be an associative ring with identity. We establish that the generalized Auslander-Reiten conjecture
implies the Wakamatsu tilting conjecture. Furthermore, we prove that any Wakamatsu tilting $R$-module of finite projective
dimension that is tensorly faithful is projective. By utilizing this result, we show the validity of the Wakamatsu tilting
conjecture for $R$ in two cases: when $R$ is a left Artinian local ring or when it is the group ring of a finite group $G$
over a commutative Artinian ring.
\end{abstract}

\maketitle

\section{Introduction}

This paper concentrates on the Wakamatsu tilting conjecture. Tilting theory plays a remarkable role in the representation
theory of Artin algebras. The original concept of tilting modules was introduced by Brenner and Butler in \cite{BB}, and
Happel and Ringel in \cite{HR} for finite-dimensional algebras. Miyashita \cite{M} extended the classical notion of tilting
modules to encompass finitely presented modules of finite projective dimension over arbitrary rings.

Wakamatsu \cite{W2} made further generalizations to the concept of tilting modules over Artin algebras, allowing for the
possibility of infinite projective dimension. These generalized tilting modules are commonly referred to as Wakamatsu
tilting modules, following the established terminology in \cite{GRS}. In \cite{W1}, the definition of Wakamatsu tilting
modules was extended to apply to any ring, leading to the natural generalization of many results on tilting modules over
Artin algebras.

Let $R$ be an Artin algebra. The following homological conjectures play a significant role in the representation
theory of Artin algebras.

\vspace{0.1cm}
{\bf Finitistic Dimension Conjecture (FDC):}  The supremum of the projective dimension of all finitely generated
left $R$-modules with finite projective dimension is finite.

\vspace{0.1cm}
{\bf Wakamatsu Tilting Conjecture (WTC):} If $T_R$ is a Wakamatsu tilting $R$-module with finite projective dimension,
then $T$ is a tilting $R$-module.

\vspace{0.1cm}
{\bf Gorenstein Symmetry Conjecture (GSC):} $\id(_RR)<\infty$ if and only if $\id(R_R)<\infty$.

\vspace{0.1cm}
{\bf Generalized Auslander-Reiten Conjecture (GARC):} Let $M$ be a finitely generated left $R$-module and $n$ a
non-negative integer. If $\Ext_R^i(M,M\oplus R)=0$ for all $i>n$, then $\pd(_RM)\leq n$.

\vspace{0.1cm}
{\bf  Auslander-Reiten Conjecture (ARC):} Let $M$ be a finitely generated left $R$-module. If $\Ext_R^i(M,M\oplus R)=0$
for all $i>0$, then $M$ is projective.

\vspace{0.1cm}
{\bf Generalized Nakayama Conjecture (GNC):} Every indecomposable injective left $R$-module appears (up to isomorphism)
as a direct summand of some term of the minimal injective resolution of $_RR$.

\vspace{0.1cm}
{\bf Nakayama Conjecture (NC):} If $0\rt R\rt I^0\rt I^1\rt \cdots$ is a minimal injective resolution of $_RR$, with all
$I^i$ being projective, then $R$ is a self-injective algebra.

\vspace{0.1cm}

Nakayama \cite{N} proposed the seventh conjecture in 1958. The sixth and fifth conjectures were proposed by Auslander and
Reiten \cite{AR2}. Additionally, they proposed the third conjecture \cite{AR1}. In 2010, Wei \cite{We3} put forward the
fourth conjecture. The Wakamatsu tilting conjecture was proposed by Beligiannis and Reiten \cite[Chapter III]{BR}. Lastly,
Bass \cite{Bas} introduced the first conjecture.

It is known that the above conjectures exhibit a significant interrelation. To illustrate this further, we show that the
generalized Auslander-Reiten conjecture implies the Wakamatsu tilting conjecture. More precisely, we establish the following
result:

\begin{theorem}\label{1.1} Let $_ST_R$ be a Wakamatsu tilting module with $\pd(T_R)<\infty$. If the generalized Auslander-Reiten
conjecture holds for $S$, then $T_R$ is a tilting module.
\end{theorem}

Wei \cite{We2} proved that the (GARC) holds for Artin algebras in which every module possesses an ultimately closed projective
resolution. Consequently, (GARC) is valid for every Artin algebra that is of finite representation type, torsionless-finite,
or radical square zero.

Considering the previously established connections between the above seven conjectures and Theorem \ref{1.1}, we can represent
the implication diagram for these conjectures as follows:

\vspace{0.5cm}

\begin{center}
$\xymatrix{
\text{} & \text{FDC} \ar@{=>}[d]\\
\text{GSC} \ar@{<=}[r] & \text{WTC} \ar@//@{=>}[rrr]  &&& \text{GNC} \ar@{=>}[r] & \text{NC}\\
&\text{GARC}  \ar@3{->}[u]  \ar@2{->}[rrr]  &&& \text{ARC}  \ar@3{<->}[u]
}$
\end{center}
(Here, the notation (p)$\Rightarrow$(q) means that every Artin algebra satisfying (p) also satisfies (q), while (p)$\Rrightarrow$(q)
means that if all Artin algebras satisfy (p), then they also satisfy (q).)

\vspace{0.7cm}

Mantese and Reiten \cite{MR} proved the implication (FDC)$\Rightarrow$(WTC). Beligiannis and Reiten established the implications
(WTC)$\Rightarrow$(GNC) and (WTC)$\Rightarrow$(GSC); see \cite[Proposition 3.1]{BR}. Auslander and Reiten proved the implication
(GNC)$\Rightarrow$(NC) and the equivalence (GNC)$\xLleftRrightarrow{}$(ARC) in \cite{AR2}. The implication (GARC)$\Rightarrow$(ARC)
is trivial. Finally, the implication (GARC)$\Rrightarrow$(WTC) holds by Theorem \ref{1.1}. In the papers \cite[Lemma 6.6(3)]{CPX}
and \cite[Example 3.7]{D}, the authors noticed that an example constructed by Schulz in 1994 \cite{Sc} serves as a counterexample
to (GARC). Furthermore, \cite[Remark 3.8]{D} demonstrates that (GARC) and (ARC) are not equivalent.

Mantese and Reiten \cite{MR} have proved that the Wakamatsu tilting conjecture holds for Gorenstein Artin algebras. The Wakamatsu
tilting conjecture is closely linked to other homological conjectures, as discussed in \cite{H, We2, We1, Z, En}. Recently, Cruz
\cite{C} showed that the Wakamatsu tilting conjecture is true for all finite-dimensional algebras over a field if and only if,
over any such algebra, every self-orthogonal $n$-quasi-generator has projective dimension at most $n$.

According to the setting of \cite{W1}, we consider the Wakamatsu tilting conjecture for arbitrary rings. Let $R$ be an associative
ring with identity. We prove that:

\begin{theorem}\label{1.2} Let $T_R$ be a Wakamatsu tilting $R$-module of finite projective dimension. If $T$ is tensorly
faithful, then $T$ is projective.
\end{theorem}

Furthermore, we establish that if $R$ is left perfect and left Kasch, then every Wakamatsu tilting module is tensorly faithful.
This allows us to deduce the main result of this paper:

\begin{corollary}\label{1.3} The Wakamatsu tilting conjecture holds for $R$ if $R$ is either a left Artinian local ring or the group
ring of a finite group $G$ over a commutative Artinian ring.
\end{corollary}

\section{The Results}

Throughout this article, the word {\it ring} refers to an associative ring with identity, and all modules are unitary. Let $R$
and $S$ be two rings. To indicate that an $R$-module $M$ is a right module (respectively, a left module), we use the symbol $M_R$
(respectively, $_RM$).

Let $\mathscr{C}$ be a full subcategory of the category of left $S$-modules. The notation $\mathscr{C}^{\perp}$ denotes the subcategory
of $S$-modules $_SN$ such that $\Ext^{i\geqslant1}_{S}(X,N)=0$ for all $X\in \mathscr{C}$. Dually, the notation $~^{\perp}\mathscr{C}$
denotes the subcategory of $S$-modules $_SN$ such that $\Ext^{i\geqslant1}_{S}(N,X)=0$ for all $X\in \mathscr{C}$.

Let $_ST_R$ be a bimodule. We denote by $\add(_ST)$ the class of left $S$-modules which are isomorphic to a direct summand of
a direct sum of finitely many copies of $T$. Let $\gen^*(T)$ denote the class of $S$-modules $_SN$ for which, there exists an
exact sequence $$\cdots \st{f_2} \longrightarrow T_1 \st{f_1}\longrightarrow T_0 \st{f_0}\longrightarrow N\longrightarrow 0$$
with each $T_i\in \add(_ST)$ and $\Ext^1_S(T,\Ker f_i)=0$ for all $i\geqslant 0$. We put $\CX_{{T}}:={T}^\perp \cap \gen^*(T)$.
Dually,  let $\cogen^*(T)$ denote the class of $R$-modules $N_R$ for which there exists an exact sequence of the form
$$0\rt N \st{f_0}\rt T_0 \st{f_1}\rt T_1 \st{f_2}\rt \cdots$$ with each $T_i\in \add(T_R)$ and $\Ext^1_R(\Coker f_i,T)=0$ for all
$i\geqslant 0$.

Let us start by considering the following two definitions.

\begin{definition}\label{2.1} (See \cite{M}.) Let $T_R$ be an $R$-module and $n\in \mathbb{N}$. We say that $T_R$ is $n$-{\it tilting}
if
\begin{itemize}
\item[(i)]  $T_R\in \gen^*(R)$ and $\pd(T_R)\leq n$,
\item[(ii)] $\Ext^i_{R}(T,T)=0$ for all $i>0$, and
\item[(iii)] There is an exact sequence $$0\rt R\rt T_0 \rt T_1\rt \cdots \rt T_n\rt 0,$$ where $T_i\in \add(T_R)$ for all
$0\leq i\leq n$.
\end{itemize}
\end{definition}

\begin{definition}\label{2.2} (See \cite[Section 3]{W1}.) Let $T_R$ be an $R$-module. We say that $T_R$ is a {\it Wakamatsu tilting}
module if
\begin{itemize}
\item[(i)] $T_R\in \gen^*(R)$,	
\item[(ii)] $\Ext^i_{R}(T,T)=0$ for all $i>0$, and
\item[(iii)] There is an exact sequence $$0\rt R \st{f_0}\rt T_0 \st{f_1}\rt T_1\st{f_2}\rt T_2\rt \cdots ,$$ where $T_i\in
\add(T_R)$ for all $i$, which stays exact after applying the functor $\Hom_R(-,T)$.
\end{itemize}
\end{definition}

It is easy to verify that every $n$-tilting module is also a Wakamatsu tilting module. The concept of Wakamatsu tilting left
modules is defined in a similar manner.

By \cite[Corollary 3.2]{W1}, we have the following characterization of the Wakamatsu tilting modules.

\begin{lemma}\label{2.3} For a bimodule $_ST_R$, the following are equivalent:
\begin{itemize}
\item[(i)] $T_R$ is a Wakamatsu tilting module with $S\cong \End(T_R)^{\Hop};$
\item[(ii)] $_ST$ is a Wakamatsu tilting module with $R\cong \End(_ST)^{\Hop};$
\item[(iii)] One has
\begin{itemize}
\item[(1)]  $T_R\in \gen^*(R)$  and $~_ST\in \gen^*(S)$.
\item[(2)]  $R\cong \End(_ST)^{\Hop}$ and $S\cong \End(T_R)^{\Hop}$.
\item[(3)]  $\Ext^i_{R}(T,T)=0$  and $\Ext^i_{S}(T,T)=0$ for all $i>0$.
\end{itemize}
\end{itemize}
\end{lemma}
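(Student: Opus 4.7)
The plan is to establish the lemma by first reducing to the single equivalence (i) $\Leftrightarrow$ (iii); the equivalence (ii) $\Leftrightarrow$ (iii) will then follow by symmetry, since condition (iii) is manifestly left--right symmetric in $T$. The technical engine is a pair of evaluation isomorphisms of Miyashita type. Under $\End(T_R) = S$, the canonical map
$$T_i \longrightarrow \Hom_S(\Hom_R(T_i, T), T), \qquad x \mapsto \bigl(\phi \mapsto \phi(x)\bigr),$$
is an isomorphism for every $T_i \in \add_R T$: by additivity it reduces to the case $T_i = T$, where it becomes the tautological identification $T \cong \Hom_S(S, T)$. Symmetrically, under $\End({}_S T) = R$, the map $Q \to \Hom_R(\Hom_S(Q, T), T)$ is an isomorphism for every $Q \in \add_S S$. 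In particular, $\Hom_R(-, T)$ and $\Hom_S(-, T)$ induce a duality between $\add_R T$ and $\add_S S$.

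For (i) $\Rightarrow$ (iii), I would start from the Wakamatsu resolution $0 \to R \to T_0 \to T_1 \to \cdots$ of Definition \ref{2.2}(iii), which by hypothesis stays exact under $\Hom_R(-, T)$. Applying that functor and using $\Hom_R(R, T) = T$ yields an exact sequence
$$\cdots \longrightarrow P_1 \longrightarrow P_0 \longrightarrow T \longrightarrow 0, \qquad P_i := \Hom_R(T_i, T),$$
in which each $P_i$ lies in $\add_S S$ since $T_i \in \add_R T$. This already witnesses ${}_S T \in \gen^*(S)$. Applying $\Hom_S(-, T)$ to this projective resolution computes $\Ext^\ast_S(T, T)$, and via the evaluation isomorphism the resulting complex is identified term-by-term with the tail $T_0 \to T_1 \to T_2 \to \cdots$ of the original sequence. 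By exactness of the latter, the zeroth cohomology equals $\im(R \to T_0) \cong R$ and all higher cohomologies vanish; this simultaneously reads off both $\End({}_S T) = R$ and $\Ext^{i > 0}_S(T, T) = 0$. The remaining items of (iii) are part of the hypothesis (i) or of Definition \ref{2.2}.

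For (iii) $\Rightarrow$ (i), I would use ${}_S T \in \gen^*(S)$ to choose an exact sequence
$$\cdots \longrightarrow Q_1 \longrightarrow Q_0 \longrightarrow T \longrightarrow 0$$
with each $Q_i \in \add_S S$ a finitely generated projective left $S$-module, and apply $\Hom_S(-, T)$ to obtain a complex $0 \to \Hom_S(T, T) \to T_0 \to T_1 \to \cdots$ with $T_i := \Hom_S(Q_i, T) \in \add_R T$ and cohomology $\Ext^\ast_S(T, T)$. Conditions (iii)(2) and (iii)(3) identify this cohomology as $R$ in degree zero and zero in positive degrees, yielding the desired exact sequence $0 \to R \to T_0 \to T_1 \to \cdots$. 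To check it stays exact under $\Hom_R(-, T)$, apply that functor; by the second evaluation isomorphism the result is naturally identified with the original resolution $\cdots \to Q_1 \to Q_0 \to T \to 0$, which was exact by construction. The remaining clauses of Definition \ref{2.2} for $T_R$ are already contained in (iii).

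The main obstacle will be careful bookkeeping of the double-Hom evaluations: one needs to verify not merely that the terms line up but also that the differentials agree, which comes down to the naturality of the evaluation maps. Once this is spelled out, the argument reduces to a diagram chase transporting resolutions back and forth across the duality $\Hom_R(-,T)\colon \add_R T \leftrightarrows \add_S S \colon \Hom_S(-,T)$.
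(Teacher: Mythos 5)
Your argument is correct, but note that the paper does not actually prove Lemma \ref{2.3}: it simply quotes \cite[Corollary 3.2]{W1}, so what you have written is in effect a reconstruction of Wakamatsu's own argument (the same double-dual mechanism that underlies the semidualizing-bimodule formulation of Holm and White). Your decomposition --- prove (i) $\Leftrightarrow$ (iii) and get (ii) $\Leftrightarrow$ (iii) from the left--right symmetry of (iii) --- is the right one, and transporting the coresolution $0\rt R\rt T_0\rt T_1\rt\cdots$ across $\Hom_R(-,T)$ to a degreewise finite projective resolution of $_ST$, then reading off $\End(_ST)=R$ and $\Ext^{i>0}_S(T,T)=0$ as the cohomology of $\Hom_S(-,T)$ applied to that resolution, is exactly how the equivalence is established in the source. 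Two points deserve attention. First, a small misattribution of hypotheses: for $Q\in\add_SS$ the evaluation map $Q\rt \Hom_R(\Hom_S(Q,T),T)$ reduces, when $Q=S$, to the structure map $S\rt\End(T_R)$, so it is an isomorphism under $\End(T_R)=S$ rather than under $\End(_ST)=R$ as you state; this is harmless where you use it, since in (iii) $\Rightarrow$ (i) both conditions of (iii)(2) are available, and in (i) $\Rightarrow$ (iii) you only invoke the evaluation isomorphism on $\add_RT$, for which you cite the correct hypothesis. Second, when you identify the degree-zero cohomology of the complex obtained by applying $\Hom_S(-,T)$ to the projective resolution with $\im(R\rt T_0)\cong R$, you should make explicit that the naturality square for the evaluation maps at $R$ and at $T_0$ identifies this isomorphism with the canonical ring homomorphism $r\mapsto(t\mapsto tr)$; this is precisely where $\End(_ST)=R$ is being proved, as opposed to a mere abstract isomorphism $R\cong\Hom_S(T,T)$. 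With those clarifications the proof is complete and self-contained, which is arguably an improvement over the paper's bare citation.
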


Henceforth, a bimodule $_ST_R$ that satisfies the conditions stated in Lemma \ref{2.3} will be referred to as a Wakamatsu tilting
module.

Now, we are prepared to provide the proof of Theorem \ref{1.1}.

\begin{theorem}\label{2.18} Let $_ST_R$ be a Wakamatsu tilting module with $\pd(T_R)<\infty$. If the generalized Auslander-Reiten
conjecture holds for $S$, then $T_R$ is a tilting module.
\end{theorem}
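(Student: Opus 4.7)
My plan is to exploit the symmetry of the bimodule $_ST_R$ provided by Lemma~\ref{2.3}: apply GARC on the $S$-side to conclude $\pd({}_ST)<\infty$, and then transport this information back across the bimodule to produce the required finite $\add_R T$-coresolution of $R$.

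Conditions (i) and (ii) of Definition~\ref{2.1} are already in hand (with $n:=\pd(T_R)=:d$), so the essential task is (iii). The bridge is the following construction. Since $T_R\in \gen^*(R)$, $T_R$ admits a resolution by finite free right $R$-modules; truncating at level $d$ yields a finite projective resolution $0\to P_d\to\cdots\to P_0\to T\to 0$ with each $P_i$ finitely generated projective. Applying $\Hom_R(-,T)$ and using $\Ext^{i}_R(T,T)=0$ for $i>0$, I obtain the exact sequence
$$0 \to S \to I^0 \to I^1 \to \cdots \to I^d \to 0, \qquad I^i := \Hom_R(P_i,T)\in \add_S T.$$
This is a finite $\add_S T$-coresolution of $S$ of length $d$.

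From this coresolution, together with $\Ext^{i}_S(T,T)=0$ for $i>0$, a routine dimension-shifting argument (breaking the coresolution into short exact sequences $0\to Z^i\to I^i\to Z^{i+1}\to 0$ and iterating the long exact Ext sequence for $\Hom_S(T,-)$) yields $\Ext^i_S(T,S)=0$ for every $i>d$. Combined with the self-orthogonality of $_ST$, this is exactly $\Ext^i_S(T,T\oplus S)=0$ for $i>d$. Since $_ST\in \gen^*(S)$ forces $_ST$ to be finitely generated, GARC for $S$ applied at level $n=d$ now gives $\pd({}_ST)\le d$.

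Finally I symmetrize. Setting $e:=\pd({}_ST)$, I pick a finite projective resolution $0\to Q_e\to\cdots\to Q_0\to {}_ST\to 0$ by finitely generated projective left $S$-modules. Applying $\Hom_S(-,T)$ and using $\Ext^{i}_S(T,T)=0$ for $i>0$, the same argument in reverse produces
$$0\to R\to \Hom_S(Q_0,T)\to\cdots\to \Hom_S(Q_e,T)\to 0$$
with each $\Hom_S(Q_i,T)\in \add_R T$, which is precisely condition (iii). Hence $T_R$ is an $e$-tilting module. The main hurdle is the dimension-shift that converts the finite $\add_S T$-coresolution of $S$ into the vanishing $\Ext^i_S(T,S)=0$ for $i>d$; once this is set up, everything else is a clean ping-pong between the $R$- and $S$-sides held together by Lemma~\ref{2.3}.
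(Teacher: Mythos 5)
Your argument is correct and follows essentially the same route as the paper's proof: dualize a finite projective resolution of $T_R$ to obtain a finite $\add_S T$-coresolution of $S$, deduce $\Ext^{j>n}_S(T,S)=0$ by dimension shifting, invoke GARC for $S$ to get $\pd({}_ST)<\infty$, and dualize a finite projective resolution of ${}_ST$ back to an $\add_R T$-coresolution of $R$. The only cosmetic slip is the last sentence: since you only know $e=\pd({}_ST)\le d=\pd(T_R)$, the coresolution of $R$ you build has length $e$, so what you get directly is that $T_R$ is $d$-tilting after padding the coresolution to length $d$ (the paper handles this length adjustment by citing Bazzoni), not literally ``$e$-tilting'' as written.
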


\begin{proof} Assume that $\pd(T_R)=n$. So, there exists an exact sequence $$0\rt P_n \rt \cdots \rt P_2 \rt P_1\rt P_0\rt
T\rt 0, \quad (*)$$ in which each $P_i$ is a finitely generated projective right $R$-module. Since $\Hom_R(T,T)\cong S$ and
$\Ext^i_R(T,T)=0$ for all $i>0$, applying the functor $\Hom_R(-,T)$ to $(*)$ gives rise to the exact sequence $$0\rt S\rt
\Hom_R(P_0,T)\rt \Hom_R(P_1,T)\rt \cdots \rt \Hom_R(P_n,T) \rt 0.$$  It is evident that each $\Hom_R(P_i,T)$ belongs to
$\mathrm{add}(_ST)$. Hence, we have an exact sequence $$0\rt S\rt T_0\rt T_1  \rt \cdots \rt T_n \rt 0,$$  where $T_i\in
\mathrm{add}(_ST)$ for all $i=0,1, \dots, n$. Since $T_i\in \mathrm{add}(_ST)$ and $\Ext^{j>0}_S(T,T)=0$, it follows that
$\Ext^{j>0}_S(T,T_i)=0$ for all $i=0,1, \dots, n$. Hence, $$0\rt T_0\rt T_1  \rt \cdots \rt T_n\rt
0$$ is a $\Hom_S(T,-)$-cyclic coresolution of $_SS$. Consequently, for each non-negative integer $j$, $\Ext^j_S(T,S)$ is
the $j$ th homology of the following complex: $$0\rt \Hom_S(T,T_0)\rt \Hom_S(T,T_1)  \rt \cdots \rt \Hom_S(T,T_n)\rt 0.$$
In particular, $\Ext^j_S(T,S)=0$ for all $j>n$. Thus, $$\Ext^j_S(T,T\oplus S)\cong \Ext^j_S(T,T)\oplus \Ext^j_S(T,S)=0$$
for all $j>n$. Therefore, due to the validity of the generalized Auslander-Reiten conjecture for $S$, it follows that
$\pd(_ST)$ is finite.

Since $\pd(_ST)<\infty$, the module $_ST$ admits a finite  projective resolution $$0 \rt P_{\ell} \overset{d_{\ell}}\rt P_{\ell-1}
\rt \cdots \rt P_0\overset{d_0}\rt	T\rt 0, \quad (*)$$ where each $P_i$ is a finitely generated projective left $S$-module. Since
$\Hom_S(T,T)=R$ and $\Ext^i_S(T,T)=0$ for all $i>0$, applying the functor $\Hom_S(-,T)$ to $(*)$ gives rise to the exact sequence
$$0\rt R\rt \Hom_S(P_0,T)\rt \Hom_S(P_1,T)\rt \cdots \rt \Hom_S(P_{\ell},T) \rt 0.$$  It is evident that $\Hom_S(P_i,T)\in
\add(T_R)$ for all $1\leq i\leq \ell$. Hence, we get an exact sequence  $$0\rt R\rt T_0 \rt T_1\rt \cdots \rt T_{\ell}\rt
0,$$ where $T_i\in \add(T_R)$ for all $0\leq i\leq \ell$. By \cite[Proposition 3.5]{Baz}, we may assume that $\ell=n$. Therefore,
$T_R$ is a tilting module, as required.
\end{proof}

According to the definition given by Holm and White \cite[Definition 2.1]{HW}, a bimodule $_ST_R$ is referred to as a
semidualizing $(S,R)$-bimodule if it satisfies the conditions outlined in Lemma \ref{2.3}(iii). Hence, Lemma \ref{2.3}
asserts that a bimodule $_ST_R$ is a Wakamatsu tilting module if and only if it is a semidualizing $(S,R)$-bimodule.

To present our next result, we need to recall the definition of Bass classes.

\begin{definition}\label{2.4} Let $_SC_R$ be a semidualizing $(S,R)$-bimodule. The {\it Bass class} $\mathscr{B}_C(S)$ is the
class of all $S$-modules $_SM$ for which the evaluation map $$\nu_M^C:C\otimes_R\Hom_S(C,M)\rt M$$ is an isomorphism
and $$\Ext^i_S(C,M)=0=\Tor^R_i(C,\Hom_S(C,M))$$ for all $i\geq 1$.
\end{definition}

The next result is already known for Artin algebras; see \cite[Lemma 3.12]{WX}.

\begin{lemma}\label{2.5} Let $T_R$ be a Wakamatsu tilting module with $\End(T_R)=S$. Then $$\CX_{T}=\mathscr{B}_{T}(S)\cap
\gen^*(S).$$
\end{lemma}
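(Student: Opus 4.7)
I plan to prove both containments separately. For $\CX_T \subseteq \mathscr{B}_T(S) \cap \gen^*(S)$, take $N \in \CX_T$ with a $\gen^*(T)$-resolution $\cdots \to T_1 \to T_0 \to N \to 0$. A dimension-shift, using $\Ext^{j\geq 1}_S(T, T_i) = 0$ (since $T_i \in \add_S T$) together with the hypothesis $\Ext^1_S(T, K_i) = 0$ on every kernel, upgrades the latter to $\Ext^{j\geq 1}_S(T, K_i) = 0$, so the resolution is $\Hom_S(T,-)$-acyclic. Applying $\Hom_S(T,-)$ yields a finitely generated projective resolution of $M := \Hom_S(T, N)$ as a right $R$-module, since each $\Hom_S(T, T_i)$ is a direct summand of a finite copy of $\Hom_S(T, T) = R$. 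Tensoring back with $T \otimes_R -$ and invoking the natural isomorphism $T \otimes_R \Hom_S(T, T_i) \cong T_i$ recovers the original resolution; this forces $\nu_N$ to be an isomorphism and $\Tor^R_{\geq 1}(T, M) = 0$, hence $N \in \mathscr{B}_T(S)$. For $\gen^*(S)$-membership, each $T_i$ belongs to $\gen^*(S)$ (since $_ST \in \gen^*(S)$ by Lemma~\ref{2.3} and the class is closed under direct summands), and totalizing the Cartan--Eilenberg bicomplex of finitely generated projective resolutions of the $T_i$'s produces one for $N$.

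For the converse, take $N \in \mathscr{B}_T(S) \cap \gen^*(S)$. Then $N \in T^\perp$ by the Bass-class $\Ext$-vanishing, so only the $\gen^*(T)$-resolution needs to be built. I proceed inductively. Since $N$ is finitely generated (from $\gen^*(S)$) and $\nu_N$ is surjective, finitely many $\phi_1, \ldots, \phi_{n_0} \in M$ have images generating $N$, yielding a surjection $T^{n_0} \twoheadrightarrow N$ with kernel $K_0$. A pullback argument shows $\gen^*(S)$ is closed under kernels of epimorphisms between its members (split the pullback $0 \to K_0 \to B \to P \to 0$ against a f.g.\ projective $P$), so $K_0 \in \gen^*(S)$; and the long exact sequence $R^{n_0} \to M \to \Ext^1_S(T, K_0) \to 0$ reduces $\Ext^1_S(T, K_0) = 0$ to the condition that the $\phi_i$'s also generate $M$ as a right $R$-module.

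The main obstacle is therefore the claim that $M = \Hom_S(T, N)$ is finitely generated over $R$ for every $N \in \mathscr{B}_T(S) \cap \gen^*(S)$. I expect to prove this by combining the pseudo-coherence of both $_ST$ and $_SN$ (both in $\gen^*(S)$) with the Bass-class isomorphism $N \cong T \otimes_R M$: a finitely generated projective presentation $S^{b_1} \to S^{b_0} \to T \to 0$ of $_ST$ realizes $M$ as $\Ker(N^{b_0} \to N^{b_1})$, with right $R$-action from the lifts of $\End_S(T) = R$ to the resolution, and the Foxby--Auslander equivalence then transports finite generation across --- this is the step that generalizes \cite[Lemma~3.12]{WX} beyond the Artin-algebra setting. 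Granting the claim, I pick $\phi_i$'s generating $M$; the resulting $K_0 \cong T \otimes_R \Ker(R^{n_0} \twoheadrightarrow M)$ lies in $\mathscr{B}_T(S) \cap \gen^*(S)$ (the first by Auslander-class closure on the $R$-side, the second as above), so iteration produces the full $\gen^*(T)$-resolution and completes the argument.
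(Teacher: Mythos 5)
Your first inclusion $\CX_{T}\subseteq\mathscr{B}_{T}(S)\cap\gen^*(S)$ is essentially correct: the dimension shift (anchored by $N\in T^{\perp}$) does upgrade $\Ext^1_S(T,K_i)=0$ to full $\Ext$-vanishing, so the $\add_ST$-resolution is $\Hom_S(T,-)$-exact, and tensoring the resulting finitely generated projective resolution of $\Hom_S(T,N)$ back along $T\otimes_R-$ yields the Bass-class conditions; the totalization argument for membership in $\gen^*(S)$ is also fine. Be aware, though, that the paper gives no argument for this lemma at all --- its entire proof is the citation to \cite[Corollary 3.2]{Su} --- so you are attempting something more self-contained than the authors do.

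The reverse inclusion is where your argument has a genuine gap, and it sits exactly where you flag ``the main obstacle'': the claim that $M=\Hom_S(T,N)$ is finitely generated over $R$ for every $N\in\mathscr{B}_{T}(S)\cap\gen^*(S)$. Your proposed proof --- realize $M$ as $\Ker(N^{b_0}\rightarrow N^{b_1})$ from a finite presentation of $_ST$ and let the Foxby equivalence ``transport finite generation across'' --- does not work: the kernel of a map between finitely generated left $S$-modules is merely an abelian group carrying an $R$-action, and over a general ring nothing forces it to be finitely generated over $R$; likewise the Foxby equivalence between the Auslander and Bass classes neither preserves nor reflects finite generation without Noetherian-type hypotheses. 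The natural repair also fails: since $N$ is finitely generated over $S$ and $\nu_N^T$ is onto, one finds a finitely generated submodule $M_0\subseteq M$ with $T\otimes_RM_0\twoheadrightarrow N$, hence $T\otimes_R(M/M_0)=0$; but concluding $M=M_0$ from this would require $T$ to be tensorly faithful, which is \emph{not} a hypothesis of this lemma and genuinely fails for some Wakamatsu tilting modules (Example \ref{2.12}). Since $\Ext^1_S(T,\Ker(T_0\rightarrow N))\cong\Coker(\Hom_S(T,T_0)\rightarrow M)$ for \emph{any} epimorphism from $T_0\in\add_ST$, finite generation of $M$ is equivalent to the existence of even the first step of the required $\gen^*(T)$-resolution, so the point cannot be finessed; and your iteration further needs every syzygy $\Ker(R^{n_i}\rightarrow\cdots)$ to be finitely generated, i.e.\ $M\in\gen^*(R)$, which you never address. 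This is precisely the content covered by the citation to \cite{Su}, and it needs either a real argument or an explicit appeal to that reference.
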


\begin{proof} See \cite[Corollary 3.2]{Su}.
\end{proof}

Next, let us recall the definition of tensorly faithfulness.

\begin{definition}\label{2.6} An $R$-module $T_R$ is referred to as \textit{tensorly faithful} if the functor $T\otimes_R-$
is faithful, meaning that $T\otimes_RM$ is nonzero for every nonzero $R$-module $_RM$.
\end{definition}

\begin{remark}\label{2.61} Given $T_R\in \gen^*(R)$, \cite[Lemmas 3.1 and 1.2(a)]{HW} imply that $T_R$ is tensorly faithful
if and only if $\Hom_R(T,X)\neq 0$ for all nonzero $R$-modules $X$. For a projective $R$-module $T_R$ in $\gen^*(R)$, let
$\lambda:T \otimes_R \Hom(T, R)\to R$ be the trace map and $I=\im \ \lambda$. The observation that $\Hom_R(T,R/I)=0$ leads
to the conclusion that $T_R$ is tensorly faithful if and only if it is a generator for the category of right $R$-modules.
\end{remark}

The concept of tensorly faithful modules was originally introduced in \cite{HW} under the name \enquote{faithfully modules}.
In order to prevent confusion with faithful modules, we renamed this notion.

Note that if $R$ is a commutative Noetherian ring, and $S=R$, then by \cite[Proposition 3.1]{HW} every Wakamatsu tilting $R$-module
is tensorly faithful. Next, we present an example of a non-commutative ring $R$ and a Wakamatsu tilting $R$-module that is tensorly
faithful.

\begin{example}\label{2.7} Let $\Gamma$ be a commutative Noetherian ring. Suppose that $Q$ is an acyclic quiver. It is fairly easy
to verify that path algebra $R=\Gamma Q$ is a free $\Gamma$-algebra; see \cite[Definition 2.2.5]{S}. Let $E$ be a Wakamatsu tilting
$\Gamma$-module. Then \cite[Proposition 3.2]{HW} yields that $T_R=E\otimes_{\Gamma}R$ is a tensorly faithful Wakamatsu tilting
$R$-module.
\end{example}

To establish Theorem \ref{1.2}, we require the following two lemmas. The proof of the first one is closely connected
to \cite[Theorem 6.3]{HW}, where the Bass class $\mathscr{B}_T(S)$ is shown to be closed under kernels of epimorphisms.

\begin{lemma}\label{2.8} Let $_ST_R$ be a Wakamatsu tilting module. If $T_R$ is tensorly faithful, then the class $\CX_{T}$ is closed
under kernels of epimorphisms.
\end{lemma}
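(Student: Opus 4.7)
The plan is to invoke Lemma~\ref{2.5} to rewrite $\CX_{T}=\mathscr{B}_{T}(S)\cap\gen^{*}(S)$, so that closure under kernels of epimorphisms reduces to verifying two separate closures. Fix a short exact sequence $0\to A\to B\to C\to 0$ of left $S$-modules with $B,C\in\CX_{T}$.

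First I would show $A\in\mathscr{B}_{T}(S)$. Since $B,C\in T^{\perp}$, the long exact sequence of $\Ext^{*}_{S}(T,-)$ gives $\Ext^{i}_{S}(T,A)=0$ for all $i\geq 2$ and identifies $\Ext^{1}_{S}(T,A)$ with $K:=\Coker\bigl(\Hom_{S}(T,B)\to\Hom_{S}(T,C)\bigr)$. This is the crucial place where tensor faithfulness enters. Applying the right exact functor $T\otimes_{R}-$ to $\Hom_{S}(T,B)\to\Hom_{S}(T,C)\to K\to 0$ and using naturality of the evaluation map $\nu$, together with the fact that $\nu_{B}$ and $\nu_{C}$ are isomorphisms (because $B,C\in\mathscr{B}_{T}(S)$) and that $B\to C$ is surjective, forces $T\otimes_{R}K=0$. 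Tensor faithfulness then yields $K=0$, so $\Ext^{1}_{S}(T,A)=0$.

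At this point $0\to\Hom_{S}(T,A)\to\Hom_{S}(T,B)\to\Hom_{S}(T,C)\to 0$ is short exact. The long $\Tor^{R}$-sequence together with $\Tor^{R}_{\geq 1}(T,\Hom_{S}(T,B))=0=\Tor^{R}_{\geq 1}(T,\Hom_{S}(T,C))$ (from $B,C\in\mathscr{B}_{T}(S)$) delivers $\Tor^{R}_{\geq 1}(T,\Hom_{S}(T,A))=0$. Tensoring the same short exact sequence of $\Hom$'s with $T$ over $R$ produces a second short exact sequence, which maps to $0\to A\to B\to C\to 0$ via the evaluations $\nu_{A}, \nu_{B}, \nu_{C}$; since $\nu_{B}$ and $\nu_{C}$ are already known to be isomorphisms, the five lemma gives that $\nu_{A}$ is an isomorphism as well. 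Thus $A\in\mathscr{B}_{T}(S)$.

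Finally, membership in $\gen^{*}(S)$ amounts to $A$ admitting a resolution by finitely generated projective $S$-modules, and this class is closed under kernels of epimorphisms by the standard horseshoe-style argument for $\mathrm{FP}_{\infty}$-modules (using that if $B$ is $\mathrm{FP}_{n}$ and $C$ is $\mathrm{FP}_{n+1}$, then $A$ is $\mathrm{FP}_{n}$). The genuine obstacle is the very first step, forcing $\Ext^{1}_{S}(T,A)=0$; this is precisely where, and essentially the only place where, the tensor faithfulness hypothesis is needed, and everything else reduces to routine applications of long exact sequences and the five lemma.
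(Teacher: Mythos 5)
Your proposal is correct and follows essentially the same route as the paper: reduce via Lemma~\ref{2.5} to $\mathscr{B}_{T}(S)\cap\gen^{*}(S)$, use tensor faithfulness on the cokernel diagram to kill $\Ext^{1}_{S}(T,A)$, then the long exact sequences and the five lemma for the remaining Bass-class conditions. The only cosmetic difference is that the paper disposes of the $\gen^{*}(S)$ closure by citing a lemma of Wakamatsu rather than the standard $\mathrm{FP}_{\infty}$ argument you invoke, which is equally valid since $\gen^{*}(S)$ is exactly the class of modules with a resolution by finitely generated projectives.
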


\begin{proof} Assuming that $T_R$ is tensorly faithful, let us consider a short exact sequence of left $S$-modules:
$$0 \rightarrow M' \rightarrow M \rightarrow M'' \rightarrow 0,$$ where $M$ and $M''$ are in $\mathcal{X}_T$. According to Lemma
\ref{2.5}, we need to prove that $$M'\in \mathscr{B}_T(S)\cap \mathrm{gen}^*(S).$$ By \cite[Lemma 4.3]{W1}, it turns out that $M'\in
\mathrm{gen}^*(S)$, and so it remains to show that $M'\in \mathscr{B}_T(S)$.
	
Consider the following diagram with exact rows:\\

\vspace{0.3cm}
\begin{tikzcd}
T\otimes_R\Hom_S(T,M)\arrow{r}\arrow{d}{\nu_{M}^T}	&T\otimes_R\Hom_S(T,M'')\arrow{r}\arrow{d}{\nu_{M''}^T}
&T\otimes_R\Ext_S^1(T,M')\arrow{r}&0\\
M\arrow{r}&M''\arrow{r}&0
\end{tikzcd}
\vspace{0.3cm}\\
Since the maps $\nu_M^T$ and $\nu_{M''}^T$ are isomorphisms, we deduce $T\otimes_R \mathrm{Ext}_S^1(T,M')=0$.
As $T_R$ is tensorly faithful, we get $\mathrm{Ext}_S^1(T,M')=0$. Additionally, for any $i\geq 1$, we
have the exact sequence $$\mathrm{Ext}_S^i(T,M'') \rightarrow \mathrm{Ext}_S^{i+1}(T,M') \rightarrow
\mathrm{Ext}_S^{i+1}(T,M).$$ Thus, we conclude that $\mathrm{Ext}_S^i(T,M')=0$ for all $i\geq 1$. From the
exact sequence $$0\rightarrow \mathrm{Hom}_S(T,M')\rightarrow \mathrm{Hom}_S(T,M) \rightarrow \mathrm{Hom}_S(T,M'')
\rightarrow 0,$$ and the fact that $M$ and $M''$ belong to $\mathscr{B}_C(S)$, it follows that
$\mathrm{Tor}^R_i(T,\mathrm{Hom}_S(T,M'))=0$ for all $i\geq 1$.

Next, we consider the following diagram with exact rows:

\vspace{0.3cm}
\begin{tikzcd}
0\arrow{r} 	&T\otimes_R\Hom_S(T,M')\arrow{r}\arrow{d}{\nu_{M'}^T}	&T\otimes_R\Hom_S(T,M)
\arrow{r}\arrow{d}{\nu_{M}^T}	&T\otimes_R\Hom_S(T,M'')\arrow{r}\arrow{d}{\nu_{M''}^T} &0\\
0\arrow{r} &M'\arrow{r} &M\arrow{r} &M''\arrow{r} &0
\end{tikzcd}
\vspace{0.3cm}\\
Since the maps $\nu_M^T$ and $\nu_{M''}^T$ are isomorphisms, we deduce that $\nu_{M'}^T$ is also an isomorphism.
Therefore, we have $M' \in \mathscr{B}_T(S)$.
\end{proof}

\begin{lemma}\label{2.9} Let $_ST\in \gen^*(S)$ be such that $\Ext^i_S(T,T)=0$ for all $i>0$. Then $\add(_ST)=\CX_{T}\cap
^{\perp}\CX_T$.
\end{lemma}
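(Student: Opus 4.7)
The plan is to prove both inclusions in $\add_ST = \CX_T \cap {}^\perp \CX_T$ separately, starting with the routine direction $\add_ST \subseteq \CX_T \cap {}^\perp \CX_T$. Given $M \in \add_ST$, the module $M$ is a direct summand of a finite power $T^n$. Since $\Ext^i_S(T,T) = 0$ for all $i > 0$, this vanishing propagates to $T^n$ and descends to summands, so $\Ext^i_S(T,M) = 0$ for all $i > 0$ and hence $M \in T^\perp$. Taking $T_0 = M$ with the identity map (and $T_i = 0$ for $i \geq 1$) exhibits $M \in \gen^*(T)$, giving $M \in \CX_T$. For every $X \in \CX_T \subseteq T^\perp$, the same summand argument produces $\Ext^i_S(M,X) = 0$ for all $i > 0$, so $M \in {}^\perp \CX_T$ as well.

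For the substantive inclusion $\CX_T \cap {}^\perp \CX_T \subseteq \add_ST$, I would fix $M$ in the intersection. Since $M \in \gen^*(T)$, choose a resolution
$$\cdots \xrightarrow{f_2} T_1 \xrightarrow{f_1} T_0 \xrightarrow{f_0} M \rt 0$$
with $T_i \in \add_ST$ and $\Ext^1_S(T,\Ker f_i)=0$ for every $i \geq 0$. Setting $K := \Ker f_0$, the short exact sequence
$$0 \rt K \rt T_0 \rt M \rt 0 \qquad (\star)$$
is the focus: if $(\star)$ splits, then $M$ is a direct summand of $T_0 \in \add_ST$ and therefore $M \in \add_ST$, finishing the proof. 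Splitting is equivalent to $\Ext^1_S(M,K)=0$, and the hypothesis $M \in {}^\perp \CX_T$ reduces the whole problem to showing that $K \in \CX_T$.

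The main step is thus to verify $K \in \CX_T = T^\perp \cap \gen^*(T)$. Membership $K \in \gen^*(T)$ is immediate by truncation: the tail $\cdots \to T_2 \to T_1 \to K \to 0$ is exact with $T_i \in \add_ST$, and its syzygies are precisely $\Ker f_i$ for $i \geq 1$, which already satisfy the required $\Ext^1_S(T,-)$-vanishing. For $K \in T^\perp$, I would dimension-shift along $(\star)$: since $T_0 \in \add_ST$ and $\Ext^j_S(T,T)=0$ for $j > 0$, the long exact sequence for $\Ext_S(T,-)$ delivers isomorphisms $\Ext^i_S(T,M) \cong \Ext^{i+1}_S(T,K)$ for all $i \geq 1$, whose left-hand sides vanish because $M \in T^\perp$; the remaining case $\Ext^1_S(T,K)=0$ is part of the $\gen^*$-data. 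I do not foresee any genuine obstacle: the more delicate closure-under-kernels reasoning of Lemma~\ref{2.8}, which needs tensor faithfulness, is sidestepped here because we only have to pass one syzygy step rather than handle arbitrary short exact sequences in $\CX_T$.
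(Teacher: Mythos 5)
Your proof is correct and takes essentially the same route as the paper: the easy inclusion via the summand/perpendicularity argument, and the converse by splitting the first syzygy sequence $0\rt K\rt T_0\rt M\rt 0$ using $\Ext^1_S(M,K)=0$. The only difference is that you explicitly verify $K\in \CX_T$ (truncation for $\gen^*(T)$ and dimension shifting for $T^{\perp}$), a point the paper's proof asserts without detail.
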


\begin{proof}  First, we show that $\add(_ST)\subseteq \CX_T \cap ^\perp \CX_T$. The inclusion $\CX_T \subseteq {T}^\perp$ yields
that $$^\perp({T}^\perp) \subseteq ^\perp \CX_T.$$ Furthermore, since $T \in ^\perp({T}^\perp)$, we can deduce that $\add(_ST)
\subseteq ^\perp({T}^\perp)$. Consequently, $\add(_ST)\subseteq ^\perp \CX_T$. Additionally, it is evident from the definition
that $\add(_ST)\subseteq \CX_T$. Thus, $\add(_ST)\subseteq \CX_T \cap ^\perp \CX_T$.
	
Conversely, suppose that $N\in \CX_{T} \cap^{\perp}\CX_T$. Since $N\in \CX_{T}$, there exists an exact sequence $$0\rt N_1\rt T'
\rt N\rt 0,$$ where $T'\in \add(_ST)$ and $N_1\in \CX_{T}$. Since $N\in~^{\perp}\CX_T$ and $N_1\in \CX_{T}$, we get
$\Ext_R^1(N,N_1)=0$. Thus the above exact sequence splits, and so $N\in \add(_ST)$.
\end{proof}

Now, we are ready to prove Theorem \ref{1.2}.

\begin{theorem}\label{2.10} Let $_ST_R$ be a Wakamatsu tilting module. Assume that $\pd(T_R)<\infty$ and $T_R$ is tensorly faithful.
Then $T_R$ is a projective tilting module.
\end{theorem}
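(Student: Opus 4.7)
The plan is to show $_SS\in\add_S T$; applying $\Hom_S(-,T)$ will then exhibit $T_R$ as a direct summand of some $R^m$, hence as finitely generated projective.

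First, set $n=\pd(T_R)$ and pick a finite resolution $0\to P_n\to\cdots\to P_0\to T\to 0$ by finitely generated projective right $R$-modules (this exists since $T_R\in\gen^*(R)$ forces each syzygy to be finitely generated while $\pd(T_R)=n$ makes the $n$-th syzygy projective). Applying $\Hom_R(-,T)$ to this resolution and using $\Ext^i_R(T,T)=0$ for all $i>0$ produces the exact sequence
$$0\to S\to T'_0\to T'_1\to\cdots\to T'_n\to 0,$$
where $T'_i:=\Hom_R(P_i,T)$ lies in $\add_S T$ because $P_i$ is a summand of some $R^{a_i}$, so $\Hom_R(P_i,T)$ is a summand of $T^{a_i}$. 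This is precisely the construction used at the start of the proof of Theorem \ref{2.18}.

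Next, split this coresolution into short exact sequences $0\to M_i\to T'_i\to M_{i+1}\to 0$ with $M_0=S$ and $M_n=T'_n$. Each $T'_i$ lies in $\add_S T\subseteq\CX_T$ (any $T^{a}$ sits in $T^\perp\cap\gen^*(T)$ and both classes are stable under direct summands). Because $T$ is tensorly faithful, Lemma \ref{2.8} tells us that $\CX_T$ is closed under kernels of epimorphisms. Starting from $M_n=T'_n\in\CX_T$ and proceeding leftward, a descending induction along these short exact sequences places every $M_i$ in $\CX_T$; in particular $S=M_0\in\CX_T$. Since $_SS$ is projective we trivially have $S\in{}^{\perp}\CX_T$, so Lemma \ref{2.9} forces
$$S\in\CX_T\cap{}^{\perp}\CX_T=\add_S T.$$

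Finally, writing $T^m\cong S\oplus N$ as left $S$-modules and applying $\Hom_S(-,T)$, the identifications $\Hom_S(T,T)=R$ and $\Hom_S(S,T)=T$ yield $R^m\cong T\oplus\Hom_S(N,T)$ as right $R$-modules. Hence $T_R$ is a direct summand of $R^m$, and so is finitely generated projective. The proof is essentially an assembly of Lemmas \ref{2.8} and \ref{2.9} with the $\add_S T$-coresolution of $_SS$ supplied by the finiteness of $\pd(T_R)$; the tensorly faithful hypothesis is consumed entirely inside Lemma \ref{2.8}, so there is no substantial obstacle beyond recognizing how to combine the ingredients, although one should be mildly careful to keep the $P_i$ finitely generated so the $T'_i$ really land in $\add_S T$ rather than some larger additive closure.
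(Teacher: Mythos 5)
Your proposal is correct and follows essentially the same route as the paper: build the finite $\add_S T$-coresolution of $_SS$ from a finite resolution of $T_R$ by finitely generated projectives, use Lemma \ref{2.9} to place the $T'_i$ in $\CX_T$ and Lemma \ref{2.8} (where the tensorly faithful hypothesis is used) to walk back to $S\in\CX_T$, and then apply $\Hom_S(-,T)$ to a split epimorphism $T^m\to S$ to exhibit $T_R$ as a summand of $R^m$. The only cosmetic difference is that you invoke Lemma \ref{2.9} a second time to get $S\in\add_S T$, whereas the paper obtains the split sequence $0\to V\to T^{\ell}\to S\to 0$ directly from $S\in\gen^*(T)$ and the projectivity of $_SS$; these are the same observation.
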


\begin{proof} Suppose that $\pd(T_R)=n$. Then, as in the proof of Theorem \ref{2.18},  there exists an exact sequence $$0\rt S\rt
T_0\rt T_1  \rt \cdots \rt T_n \rt 0,$$  where $T_i\in \mathrm{add}(_ST)$ for all $i=0,1, \dots, n$. Thus,
according to Lemma \ref{2.9}, we conclude that $T_i\in \mathcal{X}_T$ for all $i=0,1, \dots, n$. Consequently, by Lemma
\ref{2.8}, $S\in \mathcal{X}_T$. This implies the existence of an integer $\ell>0$ and a split short exact sequence:
$$0 \rightarrow V \rightarrow T^{\ell} \rightarrow S \rightarrow 0.$$ As a result, $\Hom_S(S,T)$ becomes a direct summand of the
right $R$-module $\Hom_S(T^{\ell},T)$. Therefore, $T_R$ is projective.
\end{proof}

In light of the above result, we can provide the following example of a non-tensorly faithful Wakamatsu tilting $R$-module.

\begin{example}\label{2.12} Let $\Gamma$ be an Artinian commutative ring with identity and $R=\Gamma{\bf{Q}}$ be the
Artin $\Gamma$-algebra that corresponds to the quiver ${\bf{Q}}:1\leftarrow 2 \leftarrow 3$; see \cite[Chapter VI,
Example 1.2(d)]{ASS}. It is easy to see that $T:=P(1)\oplus P(3)\oplus I(3)$ is a 1-tilting non-projective $R$-module.
By Theorem \ref{2.10}, $T$ is not tensorly faithful.
\end{example}

\begin{definition}\label{2.13} (See \cite{F,L}.)
\begin{itemize}
\item[(i)] A ring $R$ is called left {\it Kasch} if for every left maximal ideal $\n$ of $R$, there exists a left ideal
$I$ of $R$ that is isomorphic to $R/{\n}$.
\item[(ii)] A ring $R$ is called a left {\it max\ ring} if every nonzero left $R$-module has a maximal submodule.
\end{itemize}
\end{definition}

Let $R$ be a ring, and let $E(R)$ denote the injective envelope of the $R$-module $_RR$. The ring $R$ is left Kasch if and 
only if $E(R)$ serves as a cogenerator for the category of left $R$-modules. Furthermore, $R$ is left Kasch if and only if 
$\Hom_R(M,R)\neq 0$ for every nonzero finitely generated $R$-module $_RM$. (For these assertions, see \cite[Proposition 
1.44]{NY}.) It is easy to see that every commutative Artinian ring is left Kasch. Also, we can see that every left semisimple 
ring is Kasch.

According to \cite[Theorem 28.4]{AF}, a ring $R$ is left perfect if and only if the ring $R/J(R)$ is semisimple and every 
nonzero left $R$-module has a maximal submodule. Hence, every left perfect ring is a left max ring. Additionally, a ring $R$ 
is left perfect if and only if every descending chain of principal left ideals in $R$ terminates. Thus, every left Artinian 
ring is left perfect and, consequently, left max.

\begin{lemma}\label{2.14} Let $_ST$ be an $S$-module such that the ring $R=\End(_ST)$ is left max and Kasch. Then $T_R$ is
tensorly faithful.
\end{lemma}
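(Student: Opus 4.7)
The plan is to combine the tensor-Hom adjunction with the two ring-theoretic hypotheses: the max condition will reduce the problem to simple quotients, and the Kasch condition will handle each simple quotient directly.

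First, I would take an arbitrary nonzero left $R$-module $_RM$ and use that $R$ is a left max ring to find a maximal submodule $K\subsetneq M$, so that $M/K\cong R/\n$ for some maximal left ideal $\n$ of $R$. Right-exactness of $T\otimes_R-$ produces a surjection $T\otimes_R M\twoheadrightarrow T\otimes_R(R/\n)$, so it suffices to prove $T\otimes_R(R/\n)\neq 0$ for every maximal left ideal $\n$ of $R$.

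For this, I would argue by contradiction using the standard adjunction induced by the $(S,R)$-bimodule structure of $T$. Assuming $T\otimes_R(R/\n)=0$ and plugging $N=T$ into the natural isomorphism $\Hom_S(T\otimes_R-,N)\cong\Hom_R(-,\Hom_S(T,N))$ yields
$$0=\Hom_S(T\otimes_R(R/\n),T)\cong\Hom_R(R/\n,\Hom_S(T,T))=\Hom_R(R/\n,R),$$
where the last equality uses the hypothesis $R=\End_S(T)$. But the Kasch condition provides a left ideal $I\subseteq R$ isomorphic to $R/\n$, and composing this isomorphism with the inclusion $I\hookrightarrow R$ gives a nonzero element of $\Hom_R(R/\n,R)$, contradicting the display above.

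There is no substantive obstacle: the two hypotheses correspond precisely to the two ingredients needed, with \emph{left max} supplying a simple quotient of every nonzero left $R$-module, and \emph{left Kasch} ensuring that $\Hom_R(R/\n,R)\neq 0$ for every maximal left ideal $\n$. The only mild care is to keep the sides of the bimodule $_ST_R$ straight so that the tensor-Hom adjunction is applied correctly.
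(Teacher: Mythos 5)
Your proposal is correct and follows essentially the same route as the paper: reduce to a simple quotient via the left max hypothesis, apply the tensor--Hom adjunction with $\Hom_S(T,T)=R$ to get $\Hom_R(R/\n,R)=0$, and contradict this using the Kasch condition. The only cosmetic difference is that the paper phrases the final contradiction as $\Hom_R(M,M)\cong\Hom_R(M,I)=0$ for the nonzero simple module $M$, whereas you exhibit the nonzero composite $R/\n\cong I\hookrightarrow R$ directly; both are valid.
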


\begin{proof} On the contrary, suppose that there exists a nonzero left $R$-module $N$ such that $T\otimes_RN=0$.  As $R$ is a left
max ring, $N$ has a maximal submodule $K$. Set $M=N/K$. Since the functor $T\otimes_R-$ is right exact and $T\otimes_RN=0$, we get
$T\otimes_RM=0$. Using this, we have:
$$
\begin{array}{lllll}
0&=\Hom_S(T\otimes_RM,T)\\
&\cong \Hom_R(M,\Hom_S(T,T))\\
&\cong \Hom_R(M,R).
\end{array}
$$
	
Since $R$ is left Kasch, it has a left ideal $I$ that is isomorphic to $M$. Applying the left exact functor $\Hom_R(M,-)$ to the inclusion
$0\rightarrow I\rightarrow R$ yields the exact sequence $$0\rightarrow \Hom_R(M,I)\rightarrow \Hom_R(M,R).$$ Thus, $$\Hom_R(M,M)\cong
\Hom_R(M,I)=0.$$ However, since $M$ is nonzero, it follows that $\Hom_R(M,M)\neq 0$, which leads to a contradiction.
\end{proof}

In conclusion, we would like to make the following remark. The results presented in parts (iii) and (iv) are Corollary \ref{1.3}.

\begin{remark}\label{2.15} The Wakamatsu tilting conjecture is valid for $R$ under any of the following conditions:
\begin{itemize}
\item[(i)]  $R$ is a left max and Kasch ring.
\item[(ii)] $R$ is a left Artinian ring with zero Jacobson radical.
\item[(iii)] $R$ is a left Artinian local ring.
\item[(iv)] $R$ is the group ring of a finite group $G$ over a commutative Artinian ring.
\end{itemize}
\end{remark}

\begin{proof}
(i) It follows immediately by Lemma \ref{2.14} and Theorem \ref{2.10}.

(ii) Since $R$ is a left Artinian ring with zero Jacobson radical, it follows that $R$ is semisimple. Consequently, every
short exact sequence of left $R$-modules splits. This implies that $R$ is left Kasch. Now, as any left Artinian ring is
left max, (i) completes the proof.

(iii) It is known that every left Artinian ring is left max. Consider the unique left maximal ideal $\frak m$ of
$R$. Let $I$ be a minimal nonzero left ideal of $R$. Since $I$ is simple, it is isomorphic to $R/\frak m$. Hence, $R$
is left Kasch, and the proof is completed by (i).

(iv) Let $A$ be a commutative Artinian ring and $R=A[G]$. Since $R$ is finitely generated as an $A$-module, it follows that
$R$ is a left Artinian ring, and so it is left max. Additionally, as $A$ is left Kasch, \cite[Theorem 2.4]{Sh} implies that
$R$ is also left Kasch. Thus, the conclusion follows by (i).
\end{proof}

\section*{Acknowledgement}

We would like to thank the anonymous referee for their careful and thorough reading of the paper, as well as their
valuable comments, particularly for suggesting Remark 2.8.

%%%%%%%%%%%%%%%%%%%%%%%%%%%%%%%%%%%%%%%%%%%%%%%%%%%%%%%%%%%%%%%%%%%%%%%%%%%%%%%%%%%%%%%%%%%%%%%%%%%%%%%%%%%%%%%%%%%%%%%%%%%%%%%%%%%%%


\begin{thebibliography}{9999}
	
\bibitem{AF}{F. W. Anderson and K. R. Fuller}, {\sl Rings and categories of modules},  Grad. Texts in Math., {\bf 13}
Springer-Verlag, New York, (1992).

\bibitem{ASS}{I. Assem, D. Simson, A. Skowro\'{n}ski}, {\sl Elements of representation theory of associative algebras},
vol. 1, Techniques of representation theory, London Mathematical Society Student Texts, {\bf 65}, Cambridge University
Press, Cambridge, (2006).

\bibitem{AR1}{M. Auslander and I. Reiten}, {\sl Applications of contravariantly finite subcategories}, Adv. Math.,
{\bf 86}(1), (1991), 111-152.

\bibitem{AR2}{M. Auslander and I. Reiten}, {\sl On a generalized version of the Nakayama conjecture}, Proc. Amer. Math.
Soc., {\bf 52}(1), (1975), 69-74.

\bibitem{Bas}{H. Bass}, {\sl Finitistic dimension and a homological generalization of semiprimary rings}, Trans. Amer.
Math. Soc., {\bf 95}(3), (1960), 466-488.

\bibitem{Baz}{S. Bazzoni}, {\sl A characterization of n-cotilting and n-tilting modules}, J. Algebra, {\bf 273}(1), (2004),
359-372.

\bibitem{BR}{A. Beligiannis, I. Reiten}, {\sl Homological and homotopical aspects of torsion theories}, Mem. Amer. Math.
Soc., {\bf 188}(883), (2007),  viii+207 pp.

\bibitem{BB}{S. Brenner and M. C. R. Butler}, {\sl Generalizations of the Bernstein-Gelfand-Ponomarev reflection functors},
Representation theory, II (Proc. Second Internat. Conf., Carleton Univ., Ottawa, Ont., 1979), Lecture Notes in Math.,
{\bf 832}, Springer, Berlin (1980), pp. 103-169.

\bibitem{CPX}{H. Chen, S. Pan and C. Xi}, {\sl Inductions and restrictions for stable equivalences of Morita type}, J. Pure
Appl. Algebra, {\bf 216}(3), (2012), 643-661.

\bibitem{C}{T. Cruz}, {\sl Higher Morita–Tachikawa correspondence}, Bull. Lond. Math. Soc., {\bf 56}(8), (2024), 2647-2660.

\bibitem{D}{K. Diveris}, {\sl Finitistic extension degree}, Algebr. Represent. Theory, {\bf 17}(2), (2014), 495-506.

\bibitem{En}{H. Enomoto}, {\sl Maximal self-orthogonal modules and a new generalization of tilting modules}, arXiv:2301.13498
[math.AC].

\bibitem{F}{C. Faith}, {\sl Rings and things and a fine array of twentieth century associative algebra}, Mathematical Surveys
and Monographs, vol. {\bf 65}, Providence, RI: American Mathematical Society, (1999).

\bibitem{GRS}{E. L. Green, I. Reiten and {\O}. Solberg}, {\sl Dualities on generalized Koszul algebras}, Mem. Amer.
Math. Soc., {\bf 159}(754), (2002), xvi+67 pp.

\bibitem{H}{D. Happel}, {\sl Homological conjectures in representation theory of finite-dimensional algebras}, Sherbrooke
Lecture Notes Series, (1991).

\bibitem{HR}{D. Happel and C. M. Ringel}, {\sl Tilted algebras}, Trans. Amer. Math. Soc., {\bf 274}(2), (1982), 399-443.

\bibitem{HW}{H. Holm and D. White}, {\sl Foxby equivalence over associative rings}, J. Math. Kyoto Univ., {\bf 47}(4), (2007),
781-808.

\bibitem{L}{T. Y. Lam}, {\sl Lectures on modules and rings}, Grad. Texts in Math., {\bf 189}, Springer-Verlag,  New York,
(1999).

\bibitem{MR}{F. Mantese and I. Reiten}, {\sl Wakamatsu tilting modules}, J. Algebra, {\bf 278}(2), (2004), 532-552.

\bibitem{M}{Y. Miyashita}, {\sl Tilting modules of finite projective dimension}, Math. Z., {\bf 193}(1), (1986), 113-146.

\bibitem{N}{T. Nakayama}, {\sl On algebras with complete homology}, Abh. Math. Sem. Univ. Hamburg, {\bf 22}, (1958), 300-307.

\bibitem{NY}{W. K. Nicholson and M. F. Yousif}, {\sl Quasi-Frobenius rings}, Cambridge Tracts in Mathematics {\bf 158},
Cambridge: Cambridge University Press, (2003).

\bibitem{Sc}{R. Schulz}, {\sl A non-projective module without self-extensions}, Arch. Math., {\bf 62}(6), (1994), 497-500.

\bibitem{Sh}{L. Shen}, {\sl Group rings with annihilator conditions}, Acta Math. Hung., {\bf 156}(1), (2018), 38-46.

\bibitem{S}{J. J. Stangle}, {\sl Representation theory of orders over Cohen-Macaulay rings}, PhD thesis, Syracuse University,
(2017).

\bibitem{Su}{J. Sun}, {\sl A characterization of Auslander category}, Turk. J. Math., {\bf 37}(5), (2013), 793-805.

\bibitem{W1}{T. Wakamatsu}, {\sl Tilting modules and Auslander’s Gorenstein property}, J. Algebra, {\bf 275}(1), (2004), 3-39.

\bibitem{W2}{T. Wakamatsu}, {\sl On modules with trivial self-extensions}, J. Algebra, {\bf 114}(1), (1988), 106-114.

\bibitem{We1}{J. Wei}, {\sl Auslander generators and homological conjectures}, Glasg. Math. J., {\bf 56}(3), (2014), 503-506.

\bibitem{We2}{J. Wei}, {\sl Auslander bounds and homological conjectures}, Rev. Mat. Iberoam., {\bf 27}(3), (2011), 871-884.

\bibitem{We3}{J. Wei}, {\sl Generalized Auslander-Reiten conjecture and tilting equivalences}, Proc. Amer. Math. Soc., {\bf 138}(5),
(2010), 1581-1585.

\bibitem{WX}{J. Wei and C. Xi}, {\sl A characterization of the tilting pair}, J. Algebra, {\bf 317}(1), (2007), 376-391.

\bibitem{Z}{Y. Zhang}, {\sl Excellent extensions and homological conjectures}, Algebra Colloquium, {\bf 25}(4), (2018), 619-626.

\end{thebibliography}
\end{document}